\newtheorem{theorem}{Theorem}
\newtheorem{corollary}{Corollary}
\newtheorem{proposition}{Proposition}
\newtheorem{example}{Example}
\begin{document}
\thispagestyle{empty}

\begin{center}
\textbf{\Large Embedding an Edge-colored $K(a^{(p)};\lambda,\mu )$ into a Hamiltonian Decomposition of $K(a^{(p+r)};\lambda,\mu )$}

\addvspace{\bigskipamount}
Amin Bahmanian\footnote{mzb0004@auburn.edu}, C. A. Rodger\footnote{rodgec1@auburn.edu} \\
Department of Mathematics and Statistics\\
 Auburn University, Auburn, AL\\ USA   36849-5310
\end{center}

\typeout{Abstract}
Let $K(a^{(p)};\lambda,\mu )$  be a graph with $p$ parts, each part having size $a$, in which the multiplicity of each pair of vertices in the same part (in different parts) is  $\lambda$ ($\mu $, respectively). In this paper we consider the following embedding problem:
When can a graph decomposition of $K(a^{(p)};\lambda,\mu )$ be extended to a Hamiltonian decomposition of $K(a^{(p+r)};\lambda,\mu )$ for $r>0$?
A general result is proved, which is then used to solve the embedding problem for all $r\geq  \frac{\lambda}{\mu a}+\frac{p-1}{a-1}$. The problem is also solved when $r$ is as small as possible in two different senses, namely when $r=1$ and when $r=\frac{\lambda}{\mu a}-p+1$.

Keywords. Amalgamations; Detachments; Hamiltonian Decomposition; Edge-coloring; Hamiltonian Cycles; Embedding
\section{Introduction}

Let $G=(V,E)$ be a graph and let $H=\{H_{i}\}_{i\in I}$ be a family of graphs where $H_{i}=(V_{i},E_{i})$.  We say that $G$ has an \textit{$H$-decomposition} if $\{E_{i}:i\in I\}$ partitions $E$ and each $E_{i}$ induces an isomorphic copy of $H_{i}$. Graph decomposition in general has been studied for many classes of graphs. The decomposition of a graph into paths \cite{Tarsi83}, cycles \cite{Sajna 1} or stars \cite{Tarsi81} has been of special interest over the years. Of particular interest is the decomposition of a graph into Hamiltonian cycles; that is a \textit{ Hamiltonian Decomposition}. 
In 1892 Walecki \cite{L} proved the classic result that the complete graph $K_n$ is decomposable into Hamiltonian cycles if and only if $n$ is odd. Laskar and Auerbach \cite{LA} settled the existence of Hamiltonian decomposition of the complete multipartite graph $K_{m,\ldots,m}$. 
Alspach, Gavlas, and \u{S}ajna \cite{Alspach and Gavlas, Sajna 1, Sajna 2} collectively solved the problem of decomposing the complete graph into isomorphic cycles, but the problem remains open for different cycle lengths.

A \textit{k-edge-coloring} of $G$ is a mapping ${\cal K}:E(G)\rightarrow C$, where $C$ is a set of $k$ \textit{colors} (often we use $C=\{1,\ldots,k\}$), and the edges of one color form a \textit{color class}.  
Another challenge is the companion embedding problem:

Let $H=\{H_{i}\}_{i\in I}$ and  $H^*=\{H^*_{j}\}_{j\in J}$ be two families of graphs. Given a graph $G$ with an $H$-decomposition and a graph $G^*$ which is a supergraph of $G$ (or $G$ is a subgraph of $G^*$), under what circumstances one can extend the $H$-decomposition of $G$ into an $H^*$-decomposition of $G^*$? 
In other words, given an edge-coloring of $G$ (that can be considered as a decomposition when each color class induces a graph in $H$), is it possible to extend this coloring to an edge-coloring of $G^*$ so that each color class of $G^*$ induces a graph in $H^*$? 

In this direction, Hilton \cite{H2} found necessary and sufficient conditions for a decomposition of  $K_m$ to be embedded into a Hamiltonian decomposition of  $K_{m+n}$, which later was generalized by Nash-Williams \cite{Nash87}. Hilton and Rodger \cite{HR} considered the embedding of Hamiltonian decompositions for complete multipartite graphs. For embedding factorizations see \cite{HJRW, RW}, where the connectivity of the graphs in $H^*$ is one defining property. We also note that embedding problems first were studied for latin squares by M. Hall \cite{MHall45}. For extensions of Hall's theorem see \cite{LinRod92,  AndHil1, AndHil2}.

By the \textit {multiplicity} of a pair of vertices  $u,v$ of $G$, we mean the number of edges joining $u$ and $v$ in $G$.
In this paper $K(a_1,\ldots, a_p;\lambda,\mu )$ denotes a graph with $p$ parts, the $i^{th}$ part having size $a_i$, in which multiplicity of each pair of vertices in the same part (in different parts) is  $\lambda$ ($\mu $, respectively). When $a_1 = \ldots = a_p=a$, we denote $K(a_1,\ldots, a_p;\lambda,\mu )$ by $K(a^{(p)};\lambda,\mu )$. Let us say that an edge in $K(a^{(p)};\lambda,\mu )$ is pure (mixed, respectively) if its endpoints are in the same (different, respectively) part(s). If we replace each edge of $G$ by $\lambda$ multiple edges, then we denote the new graph by $\lambda G$. 

The graph $K(a_1,\ldots, a_p;\lambda,\mu )$ is of particular interest to statisticians, who consider group divisible designs with two associate classes, beginning over 50 years ago with the work of Bose and Shimamoto \cite{BoSh}. Decompositions of $K(a_1,\ldots, a_p;\lambda,\mu )$ into  cycles of length $m$ have been studied for small values of $m$ \cite{FuRod98, FuRod01, FuRodSar}. Recently, Bahmanian and Rodger  have settled the existence problem completely for longest (i.e. Hamiltonian) cycles in \cite{BahRod1}. In this paper, we study conditions under which one can embed a decomposition of $K(a^{(p)};\lambda,\mu )$ into a Hamiltonian decomposition of $K(a^{(p+r)};\lambda,\mu )$ for $r>0$. Our proof is largely based on our results in \cite{BahRod1} (see Theorem \ref{mainthesp} in the next section).

If $G$ is a $k$-edge-colored graph, and if $u,v\in V(G)$  then  $\ell(u)$ denotes the number of loops incident with vertex $u$, $d(u)$ denotes the degree of vertex $u$ (loops are considered to contribute two to the degree of the incident vertex), and $m(u,v)$ denotes the multiplicity of pair $u$  and $v$. The subgraph of $G$ induced by the edges colored $j$ is denoted by $G(j)$, and $\omega (G)$ is the number of components of $G$.

\section{Amalgamation and Graph Embedding} 
\textit{Amalgamating} a finite graph $G$ can be thought of as taking $G$, partitioning its vertices, then for each element of the partition squashing the vertices to form a single vertex in the amalgamated graph $H$. Any edge incident with an original vertex in $G$ is then incident with the corresponding new vertex in $H$, and any edge joining two vertices that are squashed together in $G$ becomes a loop on the new vertex in $H$. 

A \textit{detachment} of $H$ is, intuitively speaking, a graph $G$ obtained from $H$ by splitting some or all of its vertices into more than one vertex. That is, to each vertex $\alpha$ of $H$ there corresponds a subset $V_{\alpha}$ of $V(G)$ such that an edge joining two vertices $\alpha$ and $\beta$ in $H$ will join some element of  $V_{\alpha}$ to some element of $V_{\beta}$. 
If $\eta$ is a function from $V(H)$ into $\mathbb {N}$ (the set of positive integers), then an \textit{$\eta$-detachment} of $H$ is a detachment of $H$ in which each vertex $u$ of $H$ splits into $\eta(u)$ vertices. For a more precise definition of amalgamation and detachment, we refer the reader to \cite{BahRod1}. 

Since two graphs $G$ and $H$ related in the above manner have an obvious bijection between the edges, an edge-coloring of $G$ or $H$, naturally induces an edge-coloring on the other graph. Hence an amalgamation of a graph with colored edges is a graph with colored edges. 

The technique of vertex amalgamation, which was developed in the 1980s by Rodger and Hilton, has proved to be very powerful in decomposing of various classes of graphs. For a survey about the method of amalgamation and embedding partial edge-colorings we refer the reader to \cite{AndRod1}. In \cite{BahRod1}, the authors proved a general detachment theorem for multigraphs. For the purpose of this paper we use a very special case of this theorem as follows:

\begin{theorem}\textup{(Bahmanian, Rodger \cite[Theorem 1]{BahRod1})}\label{mainthesp}
Let $H$ be a $k$-edge-colored graph all of whose color classes are connected, and let $\eta$ be a function from $V(H)$ into $\mathbb{N}$ such that for each $v \in V(H)$\textup{:} \textup{(i)} $\eta (v) = 1$ implies $\ell_H (v) = 0$, \textup {(ii)} $d_{H(j)}(v)/\eta(v)$ is an even integer for $1\leq j\leq k$, \textup {(iii)} $\binom{\eta(v)}{2}$ divides $\ell_H(v)$, and \textup {(iv)} $\eta(v)\eta(w)$ divides $m_{H}(v,w)$ for each $w\in V(H)\backslash\{v\}$. Then there exists a loopless $\eta$-detachment $G$ of $H$ in which each $v\in V(H)$ is detached into $v_1,\ldots, v_{\eta(v)}$, all of whose color classes are connected, and for $v\in V(H)$\textup{:}
\begin{itemize}
\item [\textup{(i)}] $m_G(v_i, v_{i'}) = \ell_H(v)/\binom {\eta(v)}{2} $ for $1\leq i<i'\leq \eta(v)$ if $\eta (v) \geq 2$, 
\item [\textup{(ii)}] $m_G(v_i, w_{i'}) = m_H(v, w)/(\eta (v) \eta (w)) $ for $w\in V(H)\backslash\{v\}$, $1\leq i\leq \eta(v)$ and $1\leq i'\leq \eta(w)$, and
\item [\textup{(iii)}] $d_{G(j)}(v_i) = d_{H(j)}(v)/\eta (v)$ for $1\leq i\leq \eta(v)$  and $1\leq j\leq k$.
\end{itemize}
\end{theorem}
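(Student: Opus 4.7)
The plan is to prove the theorem by induction on the quantity $N(\eta):=\sum_{v\in V(H)}(\eta(v)-1)$, which measures how many new vertices must ultimately be produced. The base case $N(\eta)=0$ forces $\eta\equiv 1$, so by hypothesis (i) $H$ has no loops, and the choice $G=H$ satisfies all conclusions vacuously.

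For the inductive step, I would fix a vertex $v_0\in V(H)$ with $\eta(v_0)\geq 2$ and build an intermediate graph $H'$ from $H$ by detaching a single copy $u=v_{0,1}$ off $v_0$. In $H'$, the vertex $v_0$ is replaced by $u$ together with a vertex $v_0^*$ that stands in for the amalgamation of the remaining $\eta(v_0)-1$ copies to be created. The conclusions of the theorem, applied to the final $G$, force the multiplicities and color-degrees in $H'$: the loops at $v_0$ must split so that exactly $\binom{\eta(v_0)-1}{2}\ell_H(v_0)/\binom{\eta(v_0)}{2}$ of them remain as loops at $v_0^*$, $(\eta(v_0)-1)\ell_H(v_0)/\binom{\eta(v_0)}{2}$ of them become $v_0^*u$-edges, and none remain at $u$; for each $w\neq v_0$ the multi-edges between $v_0$ and $w$ split in the ratio $(\eta(v_0)-1):1$ between $v_0^*$ and $u$; and each color-$j$ degree at $v_0$ splits in the same ratio. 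A short arithmetic check using (ii)--(iv) confirms that all these target values are non-negative integers and that the modified function $\eta'$ (with $\eta'(v_0^*)=\eta(v_0)-1$, $\eta'(u)=1$, and $\eta'(w)=\eta(w)$ otherwise) again satisfies (i)--(iv) on $H'$. Since $N(\eta')=N(\eta)-1$, the inductive hypothesis applied to $(H',\eta')$ then yields the full detachment $G$.

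The hard part is realizing this single-vertex split so that every color class of $H'$ remains connected. Hypothesis (ii) forces every $d_{H(j)}(v)$ to be even, so each connected color class $H(j)$ is Eulerian and admits a closed Eulerian walk $C_j$ through $v_0$. I would partition the edge-ends of $H(j)$ incident with $v_0$ into two groups of sizes $d_{H(j)}(v_0)/\eta(v_0)$ and $(\eta(v_0)-1)d_{H(j)}(v_0)/\eta(v_0)$, in a way consistent with the neighbor quotas above, and then redirect the first group to $u$ while leaving the second at $v_0^*$; using $C_j$ to pair these edge-ends up, the resulting $H'(j)$ is still traced by a single closed walk, and is therefore still connected. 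The delicate point is the treatment of loops: each loop at $v_0$ contributes two edge-ends and cannot be redirected so as to produce a loop at $u$, so the pairing must be chosen to yield exactly the prescribed number of new loops at $v_0^*$, new $v_0^*u$-edges, and zero new loops at $u$. This is feasible precisely because (iii) ensures $\binom{\eta(v_0)}{2}\mid\ell_H(v_0)$, which allows the loop contributions to be distributed in integer amounts. Once the inductive step is closed, repeated application reduces $\eta$ to the constant function $1$ and produces the claimed detachment $G$, with the stated values of $m_G(v_i,v_{i'})$, $m_G(v_i,w_{i'})$, and $d_{G(j)}(v_i)$ holding by construction.
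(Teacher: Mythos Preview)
This theorem is not proved in the present paper; it is quoted verbatim from \cite{BahRod1} and used as a black box. So there is no in-paper proof to compare against, and your proposal must be judged on its own merits.

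Your inductive framework (split off one new vertex at a time, decreasing $\sum_v(\eta(v)-1)$) is the standard shape for detachment theorems, and the base case and the arithmetic verification that $(H',\eta')$ again satisfies (i)--(iv) are fine. The gap is in the sentence ``partition the edge-ends of $H(j)$ incident with $v_0$ \ldots\ in a way consistent with the neighbor quotas above, and then \ldots\ using $C_j$ to pair these edge-ends up, the resulting $H'(j)$ is still traced by a single closed walk.'' These two demands pull in opposite directions, and you have not shown they are compatible.

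Concretely: to keep $H'(j)$ traced by a single closed Euler walk, each \emph{visit} of $C_j$ to $v_0$ (an in-edge-end paired with the next out-edge-end) must be sent as a unit to either $u$ or $v_0^*$; if you split a visit, the walk breaks at that point and your connectivity argument collapses. But the multiplicity quotas are stated per neighbour $w$ and are \emph{global} across colours, not per colour: you need, summed over $j$, exactly $m_H(v_0,w)/\eta(v_0)$ of the $v_0w$-edges to land at $u$. A visit, however, bundles together an edge to some $w_1$ with an edge to some $w_2$ (or a loop-end), so choosing which visits go to $u$ gives you only coarse control over individual neighbour counts. You have asserted, but not argued, that one can select $d_{H(j)}(v_0)/(2\eta(v_0))$ visits in each colour so that all the per-neighbour totals and the loop totals come out exactly right. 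That is the heart of the theorem, and it does not follow from the Euler-walk picture alone; in the literature this step is handled with an additional balancing device (e.g.\ an equitable edge-colouring of an auxiliary bipartite graph, or Nash-Williams' lemma on laminar families), which your sketch omits.

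The loop accounting has the same issue. You correctly note that (iii) makes the target numbers integral, but integrality is necessary, not sufficient: you still have to exhibit a selection of visits that realises those integers while also respecting the per-colour degree split and the neighbour quotas simultaneously. Until that simultaneous feasibility is established, the inductive step is not closed.
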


Here is our main result:
\begin{theorem} \label{embed1th}
Let $G=K(a^{(p)};\lambda,\mu )$ with $a>1$, $\lambda\geq 0$, $\mu \geq 1$, $\lambda\neq \mu $, $r\geq 1$, and let $\omega_j=\omega\big(G(j)\big)$. For $1\leq j\leq k$, define  
\begin{equation} \label{congcond}
s_j\equiv \omega_j \pmod r \mbox{ with }1\leq s_j\leq r,
\end{equation}
 and suppose 
\begin{equation} \label{ineqsumcond}
\sum_{j=1}^k s_j\geq kr-\mu a^2\binom{r}{2}.
\end{equation} 
Then a $k$-edge-coloring of $G$  can be embedded into a Hamiltonian decomposition of $G^*=K(a^{(p+r)};\lambda,\mu )$ if and only if: 
\begin{enumerate}
\renewcommand{\theenumi}{\textup{(\roman{enumi})}}
\renewcommand{\labelenumi}{\theenumi}
\item $k=\big(\lambda(a-1)+\mu a(p+r-1)\big)/2$,
\item $\lambda\leq \mu a(p+r-1)$,
\item  Every component of $G(j)$ is a path (possibly of length $0$) for $1\leq j\leq k$, and 
\item $\omega_j\leq ar$ for $1\leq j\leq k$.
\end{enumerate}
\end{theorem}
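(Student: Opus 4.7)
The plan is to encode the desired Hamiltonian decomposition of $G^*$ as an edge-colouring of an amalgamated graph $H$ and then apply Theorem \ref{mainthesp} to detach. For necessity: (i) comes from dividing the total edge count of $G^*$ by the length $a(p+r)$ of a Hamiltonian cycle. For (ii), fix a part $P$ of size $a$ in $G^*$ and let $r_{j,P}\ge 1$ be the number of ``runs'' of the $j$-th Hamiltonian cycle inside $P$; the cycle uses $a - r_{j,P}$ pure edges in $P$, and summing over $j$ yields $ak - \sum_j r_{j,P} = \binom{a}{2}\lambda$, which combined with $r_{j,P}\ge 1$ simplifies to $\lambda\le\mu a(p+r-1)$. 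For (iii), each Hamiltonian cycle uses exactly two edges at every vertex, and $G(j)$ cannot contain a cycle (else it would coincide with the entire colour-$j$ Hamiltonian cycle and miss the new vertices), so each component of $G(j)$ is a path. For (iv), removing the $ar$ new vertices from the $j$-th Hamiltonian cycle leaves at most $ar$ sub-paths of old vertices, and these sub-paths are precisely the components of $G(j)$.

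For sufficiency, I would form $H$ from $G$ by adjoining $r$ super-vertices $\alpha_1,\ldots,\alpha_r$, placing $\lambda\binom{a}{2}$ loops at each $\alpha_i$, $\mu a^2$ parallel edges between each pair $\alpha_i,\alpha_{i'}$, and $\mu a$ parallel edges from each $\alpha_i$ to every $v\in V(G)$. With $\eta(v)=1$ for $v\in V(G)$ and $\eta(\alpha_i)=a$, the divisibility hypotheses of Theorem \ref{mainthesp} are immediate, and the resulting $\eta$-detachment is exactly $K(a^{(p+r)};\lambda,\mu)$. It therefore suffices to extend the given colouring of $G$ to a $k$-colouring of $H$ whose colour classes are all connected and satisfy $d_{H(j)}(v) = 2$ for $v\in V(G)$ and $d_{H(j)}(\alpha_i) = 2a$; Theorem \ref{mainthesp} then detaches each colour class to a Hamiltonian cycle.

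A short degree count shows that each of the $\omega_j$ path-components of $G(j)$ must contribute exactly two $V(G)$-to-super-vertex edges in colour $j$, appearing either as a ``hang'' (both edges to the same $\alpha_i$) or as a ``bridge'' (edges to two distinct $\alpha_i,\alpha_{i'}$). Together with the loops and super-to-super edges assigned to colour $j$, these determine a multigraph $T_j$ on $\{\alpha_1,\ldots,\alpha_r\}$ whose edges are the bridges and the colour-$j$ super-to-super edges; $H(j)$ is connected precisely when $T_j$ is connected. The degree bookkeeping is routine; the challenge is to distribute the global budgets of $\binom{r}{2}\mu a^2$ super-to-super edges and $r\lambda\binom{a}{2}$ loops across the $k$ colours in a way that keeps every $T_j$ connected.

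The main obstacle is this global scheduling, which is exactly where (\ref{ineqsumcond}) enters. Writing $\omega_j = q_j r + s_j$ with $1\le s_j\le r$, a maximally balanced hanging of components across the $r$ super-vertices leaves a residual ``defect'' of $r - s_j$ super-vertices not yet linked into the rest of $T_j$, and these must be reached via super-to-super edges of colour $j$. Summed over $j$, the total defect is $kr - \sum_j s_j$, and (\ref{ineqsumcond}) guarantees this is at most the available supply $\binom{r}{2}\mu a^2$. The delicate final step will be producing an explicit scheduling --- likely colour by colour, invoking (iv) to ensure each $\alpha_i$ can absorb its share of components --- that simultaneously satisfies every $T_j$'s connectivity requirement together with the global multiplicity and loop totals.
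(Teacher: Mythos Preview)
Your necessity arguments are fine and essentially match the paper's. The gap is in sufficiency: you set up the single-stage amalgamation $H$ with $r$ super-vertices and correctly identify that the whole proof reduces to the ``global scheduling'' of bridges, super-to-super edges, and loops so that every $T_j$ is connected and every $d_{H(j)}(\alpha_i)=2a$ --- but then you stop, saying this ``delicate final step'' is still to be produced. That scheduling is precisely the heart of the matter, and your proposal gives no construction for it. In particular, you must simultaneously (a) make each $T_j$ connected, (b) hit $d_{H(j)}(\alpha_i)=2a$ exactly for every $i$ and $j$, and (c) respect the hard per-pair budget of exactly $\mu a^2$ super-to-super edges between every $\alpha_i,\alpha_{i'}$ and exactly $\lambda\binom{a}{2}$ loops at every $\alpha_i$. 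Meeting (a)--(c) at once is not obviously possible from~(\ref{ineqsumcond}) alone, and your heuristic (``$r-s_j$ defective super-vertices'') does not yet yield an allocation satisfying the per-pair constraints.

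The paper sidesteps this scheduling problem entirely by a \emph{two-stage} detachment. First it amalgamates all $ar$ new vertices into a \emph{single} vertex $u$ (so connectivity of each colour class is automatic once the path-ends are joined to $u$), and colours $r-s_j$ of the loops at $u$ with colour $j$; here~(\ref{ineqsumcond}) is used not for connectivity but to guarantee there are enough loops so that $d(u)$ in colour $j$ becomes a multiple of $2r$, which is the divisibility hypothesis needed to detach $u$ into $u_1,\dots,u_r$ via Theorem~\ref{mainthesp}. The leftover loops are dumped into a temporary colour $k+1$. After the first detachment (which preserves connectivity of colours $1,\dots,k$ by Theorem~\ref{mainthesp}), the colour-$(k+1)$ edges form an even graph on $\{u_1,\dots,u_r\}$, so a $2$-factorisation of it lets one recolour those edges into colours $1,\dots,k$ while keeping each $d(u_i)$ even and at most $2a$. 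Only then are the $\lambda\binom{a}{2}$ loops placed at each $u_i$ and coloured to bring every degree up to exactly $2a$, and a second application of Theorem~\ref{mainthesp} finishes. The temporary colour plus the two-stage detachment is the missing idea that makes your scheduling problem disappear.
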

\begin{proof}
By Theorem 4.3 in \cite {BahRod1}, for $K(a^{(p+r)};\lambda,\mu )$ to be Hamiltonian decomposable, conditions (i) and (ii) are necessary and sufficient. (Condition (i) follows since $k$ must be $d_{G^*}(v)/2$. Condition (ii) follows since each Hamiltonian cycle must use at least $p+r$ mixed edges, so there must be sufficiently many mixed edges for all pure edges to be used.)   
For $1\leq j\leq k$, for $G(j)$ to be extendable to a Hamiltonian cycle in $K(a^{(p+r)};\lambda,\mu )$, the degree of each vertex has to be at most 2, and thus every component must be a path. Moreover, since each new vertex can link together two disjoint paths,
the number of components of every color class can not exceed the number of new vertices, $ar$. This proves the necessity of (i)--(iv).

Let $G=(V,E)$, and let $u$ be a vertex distinct from vertices in $V$. Define the new graph $G_1=(V_1,E_1)$ with $V_1=V\cup\{u\}$ by adding to $G$ the vertex $u$ incident with $\mu a^2\binom{r}{2}$ loops, and adding $\mu  ar$ edges between $u$ and each vertex $v$ in $V$ (see Figure \ref{figure:G_1G_2}). Note that for each $v\in V$, $d_{G_1}(v)=\lambda(a-1)+\mu a(p-1)+\mu ar=\lambda(a-1)+\mu a(p+r-1)=2k$. 
\begin{figure}[htbp]
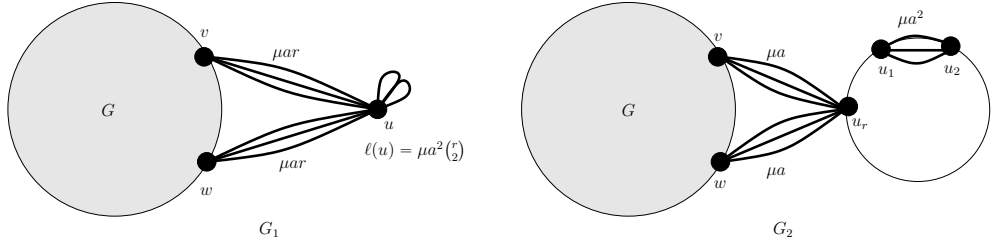

\begin{center}
\scalebox{.55}{ \input {G_1embedmu.pstex_t} \quad\quad\quad\quad\quad\quad   \input {G_2embedmu.pstex_t}}
\caption{$G_1$ and its detachment $G_2$}
\label{figure:G_1G_2}
\end{center}
\end{figure}
Now we extend the $k$-edge-coloring of $G$ to a $(k+1)$-edge-coloring of $G_1$ as follows:
\begin{enumerate}
\item [(A1)] Each edge in $G$ has the same color as it does in $G_1$,
\item [(A2)] For every $v\in V$, color the $\mu  ar$ edges between $v$ and $u$  so that $d_{G_1(j)}(v)=2$ for $1\leq j\leq k$. Since  $d_{G(j)}(v)\leq 2$ for $1\leq j\leq k$, and since $d_{G_1}(v)=2k$, this can be done. Notice that for every component of $G(j)$ (which is a path), exactly two edges (from end points of the path) are connected to $u$; so at this point $d_{G_1(j)}(u)=2\omega_j$ for $1\leq j\leq k$. 
\item [(A3)]  For $1\leq j\leq k$  color $r-s_j$ ($\geq 0$) loops with $j$. This coloring of loops can be done, since  by condition (2) of the theorem we have:
\begin{eqnarray*}
\sum_{j=1}^k s_j\geq  kr-\mu a^2\binom{r}{2} & \iff &  \sum_{j=1}^k r-\sum_{j=1}^k s_j\leq \mu a^2\binom{r}{2} \\
& \iff & \sum_{j=1}^k (r-s_j)\leq \mu a^2\binom{r}{2}=\ell_{G_1}(u).
\end{eqnarray*}
Moreover we color the remaining $\sum_{j=1}^k s_j-kr+\mu a^2\binom{r}{2}$ ($\geq 0$) loops with the new color $k+1$. 
Thus for $1\leq j\leq k$, $$d_{G_1(j)}(u)=2\omega_j+2(r-s_j)=2r+2(\omega_j-s_j),$$ and $d_{G_1(k+1)}(u)=2\Big(\sum_{j=1}^k s_j-kr+\mu a^2\binom{r}{2}\Big)$. By (1)  $d_{G_1(j)}(u)$  is an even multiple of $r$ for $1\leq j\leq k$. Now to show that $d_{G_1(k+1)}(u)$ is an even multiple of $r$, first we show that $\sum_{j=1}^{k}\omega_j=\mu a^2pr/2$. 
\begin{eqnarray*}
\sum \nolimits_{j=1}^k \omega_j  & = &  \sum \nolimits_{j=1}^k (pa-|E(G(j))|) \\
& = & kpa- |E|\\
&= & pa\big(\lambda(a-1)+\mu a(p+r-1)\big)/2-pa\big(\lambda(a-1)+\mu a(p-1)\big)/2 \\
&= &  \mu a^2pr/2. 
 \end{eqnarray*} 
Notice that $\mu a(p+r-1)$ is even, since otherwise, in particular $a$ would be odd, so $k$ would not be an integer. Thus, 
\begin{eqnarray*}
d_{G_1(k+1)}(u)  & \equiv   &  2\sum \nolimits_{j=1}^k \omega_j+\mu a^2r(r-1) \\
& \equiv & \mu a^2pr+\mu a^2r(r-1)\\
&\equiv &   \mu a^2r(p+r-1)\\
&\equiv &  0 \pmod {2r}.
 \end{eqnarray*}

Let $b_1,\ldots,b_{k+1}$ be even integers such that $d_{G_1(j)}(u)=b_jr$ for $1\leq j\leq k+1$. Note that for  $1\leq j\leq k$, we have
$$b_j/2=1+\frac{\omega_j-s_j}{r}\leq 1+\lfloor \frac{ar-1}{r}\rfloor\leq 1+(a-1)=a.$$
\end{enumerate}

Since each component of $G(j)$ is joined to $u$ in $G_1(j)$,  each color class of $G_1$ is connected. Let $\eta$ be a function from $V_1$ into $\mathbb N$ such that $\eta(v)=1$ for each $v\in V$, and $\eta(u)=r$. Now by Theorem \ref{mainthesp}, there exists an $\eta$-detachment $G_2$ of $G_1$, all of whose color classes are connected, (see Figure \ref{figure:G_1G_2}) in which $u$ is detached into $r$ new vertices $u_1,\ldots,u_r$ such that:
\begin{itemize}
\item [(a)] $m_{G_2}(u_i,u_{i'})=\mu a^2\binom{r}{2}/\binom{r}{2}=\mu a^2,$ for  $1\leq i<i'\leq r$;  
\item [(b)] $m_{G_2}(u_i,v)=\mu ar/r=\mu a$ for each $v\in V$ and each $i$, $1\leq i \leq r$; 
\item [(c)] $d_{G_2(j)}(u_i)=b_jr/r=b_j$ for $1\leq i \leq r$ and  $1\leq j \leq k+1$. \end{itemize}
We observe that $d_{G_2}(u_i)=ap(\mu a)+(r-1)\mu a^2=\mu a^2(p+r-1)$ for $1\leq i\leq r$, and is even. 
Note that by (c), $d_{G_2(j)}(u_i)=d_{G_2(j)}(u_{i'})$ and is even for $1\leq i\leq i' \leq r$, and we know that $d_{G_2}(u_i)\leq 2ka$ for $1\leq i \leq r$. Therefore, since $G(k+1)$ is an even graph, (so it has a 2-factorization), we can recolor each 2-factor of color class $k+1$ with a color $j$,  $1\leq j\leq  k$ such that $d_{G_2(j)}(u_i)\leq 2a$. 
We let $b'_1,\ldots,b'_{k}$ be even integers such that in the resulting edge-coloring of $G_2$ obtained from recoloring the color class $k+1$, $d_{G_2(j)}(u)=b'_jr$ for $1\leq j\leq k$.

Now we define the new graph $G_3$ by adding $\lambda\binom{a}{2}$ loops on every vertex $u_i$ in $G_2$, for $1\leq i \leq r$ (see Figure \ref{figure:G_3}). 
\begin{figure}[htbp]
\begin{center}
\scalebox{.55}{  \input {G_3embedmu.pstex_t}}
\caption{$G_3$}
\label{figure:G_3}
\end{center}
\end{figure}
We extend the $k$-edge-coloring of $G_2$ to a $k$-edge-coloring of $G_3$ such that:
\begin{enumerate}
\item [(B1)] Each edge in $G_2$ has the same color at it does in $G_3$,
\item [(B2)] For $1\leq i\leq r$ and $1\leq j\leq k$, there are $a-b'_j/2$ ($\geq 0$) loops incident with $u_i$ colored $j$. This is possible, for the following reason: 
\begin{eqnarray*}
\sum_{j=1}^k (a-b'_j/2)  & = &  ka-\frac{1}{2}\sum_{j=1}^k d_{G_2(j)}(u_1) \\
& = & ka- \frac{1}{2} d_{G_2}(u_1) \\
&= & ka-\frac{1}{2}\mu a^2(p+r-1)\\
&= & \frac{a}{2}\big( \lambda(a-1)+\mu a(p+r-1) \big)-\frac{1}{2}\mu a^2(p+r-1)\\
&= &  \frac{a}{2} \lambda(a-1)= \ell_{G_3}(u_1). 
 \end{eqnarray*} 
\end{enumerate}
Since for $1\leq j\leq k$, $G_2(j)$ is a connected spanning subgraph of $G_3(j)$, $G_3(j)$ is also connected. Let $\eta'$ be a function from $V_3$ into $\mathbb N$ such that $\eta'(v)=1$ for each $v\in V$, and $\eta'(u_i)=a$ for $1\leq i\leq r$. Now by Theorem \ref{mainthesp}, there exists an $\eta'$-detachment $G_4$ of $G_3$, all of whose color classes are connected, in which $u_i$ is detached into $a$ new vertices $u_{i1},\ldots,u_{ia}$ for $1\leq i\leq r$ such that:
\begin{itemize}
\item  $m_{G_4}(u_{ij},u_{ij'})=\lambda\binom{a}{2}/\binom{a}{2}=\lambda$ for $1\leq i\leq r$ and $1\leq j<j'\leq a$; 
\item $m_{G_4}(u_{ij},u_{i'j'})=\mu a^2/a^2=\mu $ for $1\leq i<i'\leq r$  and $1\leq j<j'\leq a$; 
\item $m_{G_4}(u_{ij},v)=\mu a/a=\mu $ for each $v\in V$ and for $1\leq i \leq r$; and 
\item $d_{G_4(j)}(u_{ii'})=2a/a=2$ for $1\leq i \leq r$, $1\leq i' \leq a$. 
\end{itemize}
Therefore $G_4=K(a^{(p+r)};\lambda,\mu )$, and each color class in $G_4$ is a Hamiltonian cycle, so the proof is complete.
\end{proof}

 A natural perspective of this embedding problem is to keep $a, p, \lambda$ and $\mu$ fixed, and ask for which values of $r$ the embedding is possible. The following result completely settles this question for all $r\geq  \frac{\lambda(a-1)+\mu a(p-1)}{\mu a(a-1)}$.
\begin{theorem} \label{embed1thasym}
Let $G=K(a^{(p)};\lambda,\mu )$ with $a>1$, $\lambda\geq 0$, $\mu \geq 1$, $\lambda\neq \mu $, and
\begin{equation}\label{rlargewrtp}
r\geq  \frac{\lambda(a-1)+\mu a(p-1)}{\mu a(a-1)}.
\end{equation}
Then a $k$-edge-coloring of $G$  can be embedded into a Hamiltonian decomposition of $K(a^{(p+r)};\lambda,\mu )$ if and only if \textup {(i)--(iv)} of \textup {Theorem \ref{embed1th}} are satisfied.
 \end{theorem}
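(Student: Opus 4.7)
The plan is to deduce Theorem \ref{embed1thasym} directly from Theorem \ref{embed1th} by showing that the hypothesis \eqref{rlargewrtp} forces the summation inequality \eqref{ineqsumcond} automatically, so no separate argument about the structure of the color classes is needed. Necessity of (i)--(iv) is inherited verbatim from Theorem \ref{embed1th}, since those four conditions do not involve \eqref{ineqsumcond} in their justification.

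For sufficiency, I would observe that by the choice $1\leq s_j\leq r$ in \eqref{congcond} we have $s_j\geq 1$ for every $j$, so
\[
\sum_{j=1}^k s_j\;\geq\; k.
\]
Hence it suffices to verify $k\geq kr-\mu a^2\binom{r}{2}$, i.e.\ $\mu a^2\binom{r}{2}\geq k(r-1)$. When $r=1$ both sides vanish and the claim is trivial. For $r\geq 2$, dividing by $r-1$ turns the claim into $\mu a^2 r\geq 2k$. Substituting $2k=\lambda(a-1)+\mu a(p+r-1)$ from (i) and subtracting $\mu ar$ from both sides, this rearranges to
\[
\mu a r(a-1)\;\geq\;\lambda(a-1)+\mu a(p-1),
\]
which, after dividing by $\mu a(a-1)$, is exactly the hypothesis \eqref{rlargewrtp}. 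Thus \eqref{ineqsumcond} holds, and Theorem \ref{embed1th} produces the required Hamiltonian decomposition of $K(a^{(p+r)};\lambda,\mu)$ extending the given coloring of $G$.

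There is no real obstacle: the proof is essentially a one-line algebraic check that the worst case $s_j=1$ for all $j$ of \eqref{ineqsumcond} is already guaranteed by \eqref{rlargewrtp}. The only point meriting explicit mention is the separation of the $r=1$ case (where the division by $r-1$ is not valid) from the generic $r\geq 2$ case; in both the inequality is clear.
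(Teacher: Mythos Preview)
Your argument is correct and is essentially identical to the paper's own proof: both observe that $s_j\ge 1$ forces $\sum s_j\ge k$, and then reduce \eqref{ineqsumcond} to the inequality $k(r-1)\le \mu a^2\binom{r}{2}$, which is shown to be algebraically equivalent to \eqref{rlargewrtp}. Your explicit separation of the $r=1$ case is a small added care that the paper leaves implicit in its chain of equivalences.
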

\begin{proof}
It is enough to show that (\ref{rlargewrtp}) implies (\ref{ineqsumcond}). Since $s_j\geq 1$ for $1\leq j\leq k$, $\sum_{j=1}^k s_j \geq k$. Thus, if we show that $k\geq kr-\mu a^2\binom{r}{2}$, we are done. This is true by the following sequence of equivalences:
\begin{eqnarray*}
k(r-1)\leq \mu a^2\binom{r}{2} & \iff &   \\
(r-1)\big(\lambda (a-1)+\mu a (p+r-1)\big) \leq \mu a^2 r(r-1) & \iff & \\
\lambda (a-1) \leq \mu a(ar-p-r+1)=\mu a\big( r(a-1)-(p-1)  \big) & \iff & \\
\lambda (a-1)/(\mu a) \leq r(a-1)-(p-1)& \iff & \\
r\geq  \frac{\lambda(a-1)+\mu a(p-1)}{\mu a(a-1)}.
 \end{eqnarray*} 
 \end{proof}
Another immediate corollary of Theorem \ref{embed1th} is the following complete solution to the embedding problem when $r=1$:
\begin{corollary} \label{embedcorr=1}
Let $G=K(a^{(p)};\lambda,\mu )$ with $a>1$, $\lambda\geq 0$, $\mu \geq 1$, $\lambda\neq \mu $. Then a $k$-edge-coloring of $G$  can be embedded into a Hamiltonian decomposition of $K(a^{(p+1)};\lambda,\mu )$ if and only if: 
\begin{enumerate}
\renewcommand{\theenumi}{\textup{(\roman{enumi})}}
\renewcommand{\labelenumi}{\theenumi}
\item $k=\big(\lambda(a-1)+\mu ap\big)/2$,
\item $\lambda\leq \mu ap$,
\item  Every component of $G(j)$ is a path (possibly of length $0$) for $1\leq j\leq k$, and 
\item $\omega_j\leq a$ for $1\leq j\leq k$.
\end{enumerate}
\end{corollary}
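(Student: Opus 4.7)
The plan is to derive Corollary \ref{embedcorr=1} as an immediate specialization of Theorem \ref{embed1th} with $r=1$. The entire task reduces to checking that, in this case, the extra hypotheses of Theorem \ref{embed1th} beyond (i)--(iv) are automatically satisfied, so that the ``if and only if'' for $r=1$ collapses to exactly the four listed conditions.

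First I would instantiate the congruence (\ref{congcond}) at $r=1$: the requirement $1\leq s_j\leq r=1$ forces $s_j=1$ for every $j$, regardless of the value of $\omega_j$. Consequently $\sum_{j=1}^{k}s_j=k$, while the right-hand side of (\ref{ineqsumcond}) is $kr-\mu a^2\binom{r}{2}=k\cdot 1-\mu a^2\cdot 0=k$. Thus (\ref{ineqsumcond}) holds with equality and imposes no constraint at all. Next I would plug $r=1$ into conditions (i)--(iv) of Theorem \ref{embed1th}: (i) becomes $k=(\lambda(a-1)+\mu a p)/2$, (ii) becomes $\lambda\leq \mu a p$, (iii) is unchanged, and (iv) becomes $\omega_j\leq a$, matching the four clauses of the corollary verbatim.

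Since all the numerical/congruential side conditions of Theorem \ref{embed1th} are automatically met at $r=1$, invoking that theorem yields both directions of the equivalence: the necessity of (i)--(iv) is inherited from the necessity argument in Theorem \ref{embed1th} (conditions (i),(ii) come from degree and mixed-edge counts in $K(a^{(p+1)};\lambda,\mu)$, while (iii),(iv) come from the fact that each $G(j)$ must extend to a Hamiltonian cycle by using the $a$ new vertices to splice together components), and sufficiency is obtained by running the amalgamation/detachment construction of Theorem \ref{embed1th} with $r=1$.

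There is essentially no main obstacle: the only thing to verify carefully is the vacuity of (\ref{ineqsumcond}) at $r=1$, which rests on the observation $\binom{1}{2}=0$. The corollary is therefore a one-line consequence of the main theorem, and the write-up should merely point out that $r=1$ forces $s_j=1$ and $\binom{r}{2}=0$, so (\ref{ineqsumcond}) holds trivially, and then apply Theorem \ref{embed1th}.
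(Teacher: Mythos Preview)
Your proposal is correct and matches the paper's proof essentially verbatim: the paper also observes that $r=1$ forces $s_1=\cdots=s_k=1$, so $\sum_{j=1}^k s_j = k = k - \mu a^2\binom{1}{2}$, whence condition (\ref{ineqsumcond}) is satisfied and Theorem \ref{embed1th} applies directly.
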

\begin{proof}
Since $r=1$, we have $s_1=\ldots=s_k=1$, so $k=\sum_{j=1}^k s_j= k-\mu a^2\binom{1}{2}=k$, and thus condition (\ref{ineqsumcond}) of Theorem \ref{embed1th} is satisfied. 
\end{proof}
\begin{proposition}
Whenever $\lambda \leq \mu a$ and $p\leq a$, the embedding problem is completely solved for all values of $r\geq 1$. 
\end{proposition}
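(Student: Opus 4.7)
The plan is to combine the two complete characterizations already established in this section. For $r=1$, Corollary \ref{embedcorr=1} gives a full set of necessary and sufficient conditions, namely \textup{(i)}--\textup{(iv)} of Theorem \ref{embed1th}, with no further restriction. For $r\geq 2$, Theorem \ref{embed1thasym} gives the same characterization provided that
\[
r\geq \frac{\lambda(a-1)+\mu a(p-1)}{\mu a(a-1)}.
\]
Thus it suffices to verify that, under the hypotheses $\lambda\leq \mu a$ and $p\leq a$, this threshold is at most $2$. Once this is done, every $r\geq 2$ lies in the range of Theorem \ref{embed1thasym}, and together with the $r=1$ case handled by Corollary \ref{embedcorr=1} the entire interval $r\geq 1$ is covered by a clean necessary-and-sufficient statement, which is precisely what ``completely solved'' means here.

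The verification of the threshold is a one-line computation: from $\lambda\leq \mu a$ one gets $\lambda(a-1)\leq \mu a(a-1)$, and from $p\leq a$ one gets $\mu a(p-1)\leq \mu a(a-1)$. Adding these inequalities and dividing through by the positive quantity $\mu a(a-1)$ yields
\[
\frac{\lambda(a-1)+\mu a(p-1)}{\mu a(a-1)}\leq 2,
\]
as required.

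There is no substantive obstacle; the proposition is essentially the observation that the two complementary ranges already covered by Corollary \ref{embedcorr=1} and Theorem \ref{embed1thasym} overlap to exhaust $r\geq 1$ under the stated hypotheses. The only point worth flagging is that both summands of the numerator must be bounded separately, so each of the two hypotheses $\lambda\leq \mu a$ and $p\leq a$ is used exactly once in the estimate.
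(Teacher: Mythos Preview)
Your proof is correct and follows essentially the same approach as the paper: combine Corollary \ref{embedcorr=1} for $r=1$ with Theorem \ref{embed1thasym} for $r\geq 2$, using the hypotheses $\lambda\leq \mu a$ and $p\leq a$ to bound the threshold $\frac{\lambda(a-1)+\mu a(p-1)}{\mu a(a-1)}$ by $2$. The paper phrases the bound as $\frac{\lambda}{\mu a}+\frac{p-1}{a-1}\leq 2$, but this is the same computation.
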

\begin{proof}
Condition \ref{rlargewrtp} can be rewritten as $r\geq \frac{\lambda}{\mu a}+\frac{p-1}{a-1}$. Since we are assuming that $\lambda \leq \mu a$ and $p\leq a$, we have $\frac{\lambda}{\mu a}+\frac{p-1}{a-1}\leq 2$. Therefore the result follows from 
Theorem \ref{embed1thasym} and  Corollary \ref{embedcorr=1}. 
\end{proof}
\begin{example}
\textup{
A $k$-edge-coloring of $K(10^{(7)}; 2, 5)$ can be embedded into a Hamiltonian decomposition of $K(10^{(7+r)}; 2, 5)$ for $r\geq 1$ if and only if \textup {(i)--(iv)} of \textup {Theorem \ref{embed1th}} are satisfied.  
}
\end{example}
The following result completely settles the embedding problem for the smallest value of $r$ in another sense, namely with respect to the inequality (ii) of Theorem \ref{embed1th}; so it settles the case where $\lambda=\mu a(p+r-1)$, or equivalently where $r=\frac{\lambda}{\mu a}-p+1$. 
The proof is similar to that of Theorem \ref{embed1th}, so only an outline of the proof is provided, the details being omitted. The proof of the necessity of condition (ii) of Theorem \ref{embed1th} shows that, in a Hamiltonian decomposition of $K(a^{(p)};\lambda a(p+r-1),\lambda)$, each Hamiltonian cycle contains exactly $a-1$ pure edges from each part, and exactly $p+r$ mixed edges.

\begin{theorem}
Let $a>1$ and $r,\mu \geq 1$. A $k$-edge-coloring of $G=K(a^{(p)};\mu  a(p+r-1),\mu )$ can be embedded into a Hamiltonian decomposition of $G^*=K(a^{(p+r)};\mu  a(p+r-1),\mu )$ if and only if: 
\begin{itemize}
\item [\textup{(i)}] $k=\mu  a^2(p+r-1)/2$,
\item [\textup{(ii)}]   Every component of $G(j)$ is a path (possibly of length $0$) for $1\leq j\leq k$, 
\item [\textup{(iii)}] $G(j)$ has exactly $a-1$ pure edges from each part, and at most $p-1$ mixed edges for $1\leq j \leq k$, and
\item  [\textup{(iv)}] $\omega_j\leq r$ for $1\leq j \leq k$.
\end{itemize}
\end{theorem}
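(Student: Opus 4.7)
The plan is to mirror the structure of the proof of Theorem \ref{embed1th}, exploiting the rigidity forced by the equality $\lambda=\mu a(p+r-1)$.

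For necessity, conditions (i) and (ii) are standard: (i) is degree-counting at a vertex of $G^*$, and (ii) follows because $G(j)$ is a subgraph of a $2$-regular graph. For (iii) and (iv), the key observation (already quoted in the excerpt) is that any Hamiltonian cycle in $G^*$ must use exactly $a-1$ pure edges from each part and exactly $p+r$ mixed edges; this comes from counting the $(p+r)\lambda\binom{a}{2}$ pure edges of $G^*$ divided among $k$ cycles. Restricting to $G$, each $G(j)$ therefore has exactly $a-1$ pure edges from each of its $p$ parts; since $G(j)$ is a union of $\omega_j$ paths on $ap$ old vertices, it has $ap-\omega_j$ edges in total, hence exactly $p-\omega_j\le p-1$ mixed edges, which gives (iii). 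For (iv), since each new part has $a$ vertices and contributes exactly $a-1$ pure edges of each Hamiltonian cycle, those pure edges form a Hamiltonian path within the new part, forcing the cycle to traverse each new part in a single contiguous arc. The cycle alternates between ``old'' and ``new'' arcs, so $\omega_j$ equals the number of new arcs, each of which contains at least one new part; hence $\omega_j\le r$.

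For sufficiency, I would proceed via two amalgamation-detachment stages. Form $G_1$ by adjoining a single new vertex $u$ to $G$, putting $\mu a^2\binom{r}{2}$ loops on $u$ and joining each $v\in V$ to $u$ by $\mu ar$ parallel edges. Extend the coloring exactly as in Theorem \ref{embed1th}: keep the colors of $G$; for each $v\in V$ color the edges $vu$ so that $d_{G_1(j)}(v)=2$ for every $j$ (possible since $d_{G(j)}(v)\le 2$ and $d_{G_1}(v)=2k$); and for each color $j$ assign $r-\omega_j$ loops at $u$ the color $j$. Condition (iv) ensures $r-\omega_j\ge 0$, and the same derivation as in Theorem \ref{embed1th} gives $\sum_j\omega_j=\mu a^2pr/2$, from which $\sum_j(r-\omega_j)=\mu a^2\binom{r}{2}=\ell_{G_1}(u)$ exactly. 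Thus every loop is colored and no ``extra'' color $k+1$ is required---this is the clean feature of the extremal case. Afterwards $d_{G_1(j)}(u)=2\omega_j+2(r-\omega_j)=2r$ for every $j$, and each color class is connected because every path (and every isolated vertex) of $G(j)$ is linked to $u$.

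Now apply Theorem \ref{mainthesp} with $\eta(v)=1$ for $v\in V$ and $\eta(u)=r$ (the hypotheses are immediate) to obtain $G_2$ in which $u$ splits into $u_1,\dots,u_r$ with $m(u_i,u_{i'})=\mu a^2$, $m(u_i,v)=\mu a$, and $d_{G_2(j)}(u_i)=2$ for all $i,j$. For the second stage form $G_3$ by adding $\lambda\binom{a}{2}$ loops at each $u_i$, and color them by assigning exactly $a-1$ loops of color $j$ at every $u_i$; the total $k(a-1)=\mu a^2(p+r-1)(a-1)/2$ equals $\lambda\binom{a}{2}$ on the nose, again using $\lambda=\mu a(p+r-1)$. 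A second application of Theorem \ref{mainthesp} with $\eta'(v)=1$ and $\eta'(u_i)=a$ detaches each $u_i$ into $u_{i1},\dots,u_{ia}$, forcing $m(u_{ij},u_{ij'})=\lambda$, $m(u_{ij},u_{i'j'})=\mu$, $m(u_{ij},v)=\mu$ and each color-degree equal to $2$, so $G_4=K(a^{(p+r)};\lambda,\mu)$ with every color class a Hamiltonian cycle. The main obstacle is the necessity of (iv): it rests entirely on the rigidity that $\lambda=\mu a(p+r-1)$ forces each Hamiltonian cycle to traverse every new part as a single Hamiltonian subpath, sharpening the bound $\omega_j\le ar$ of Theorem \ref{embed1th} to $\omega_j\le r$ here; once this is in hand, the sufficiency is actually simpler than before because the loop-colorings match the available loops exactly, eliminating the auxiliary color $k+1$ and the $2$-factorization recoloring of Theorem \ref{embed1th}.
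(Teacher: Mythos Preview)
Your proposal is correct and follows essentially the same two-stage amalgamation/detachment argument as the paper; the sufficiency is identical, and for the necessity of (iv) you use a geometric arc-counting argument (each new part is a single contiguous arc, hence at most $r$ new arcs and therefore at most $r$ old arcs), whereas the paper phrases the same idea via the edge counts $m_j+2\omega_j\le p+r$ and $m_j+\omega_j=p$.
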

\begin{proof}
The necessity of (i)--(iii) follows as described in Theorem \ref{embed1th}. Let $m_j$ be the number of mixed edges in $G(j)$. To extend each component $P$ of $G(j)$ to a Hamiltonian cycle in $G^*$, two mixed edges have to join $P$ to the new vertices, and since each Hamiltonian cycle in $G^*$ contains exactly $p+r$ mixed edges, we have that 
\begin{equation}\label{1eqa}
m_j+2\omega_j\leq p+r.
\end{equation}
 Since $G(j)$ is a collection of paths, for $1\leq j\leq k$, we have $|V(G(j))|=|E(G(j))|+\omega_j$. Therefore $ap=m_j+p(a-1)+\omega_j$ and thus
\begin{equation}\label{2eqa}
m_j+\omega_j=p.
\end{equation}
Combining (\ref{1eqa}) and (\ref{2eqa}) implies (iv). 

To prove sufficiency, we define the graph $G_1$ as it is defined in Theorem \ref{embed1th}. We extend the $k$-edge-coloring of $G$ to a $k$-edge-coloring of $G_1$ such that $d_{G_1(j)}(v)=2$ for each $v\in V$ and $1\leq j\leq k$. This is possible by the same argument as in Theorem \ref{embed1th}. At this point $d_{G_1(j)}(u)=2\omega_j\leq 2r$ for $1\leq j\leq k$. So we can color the loops incident with $u$ such that $d_{G_1(j)}(u)=2r$ for $1\leq j\leq k$, simply by assigning the color $j$ to $r-\omega_j$ loops. 

Now we detach the vertex $u$ into $r$ new vertices $u_1,\ldots,u_r$ to obtain the new graph $G_2$ (as we did in the proof of Theorem \ref{embed1th}). Note that $d_{G_2(j)}(u_i)=2r/r=2$ for each $i$, $1\leq i \leq r$ and each $j$, $1\leq j \leq k$. 
Now we define the new graph $G_3$ by adding $a-1$ loops of color $j$, $1\leq j\leq k$, on every vertex $u_i$ in $G_2$, for each $i$, $1\leq i \leq r$. 
So we have $d_{G_3(j)}(u_i)=2a$. Using Theorem \ref{mainthesp}, detach each vertex $u_i$  into $a$ new vertices $u_{i1},\ldots,u_{ia}$ for $1\leq i\leq r$, to obtain the new graph $G_4$ in which, $G_4(j)$ is connected and $d_{G_4(j)}(u_{ii'})=2a/a=2$ for $1\leq j\leq k$, $1\leq i \leq r$, $1\leq i' \leq a$. This completes the proof. 
\end{proof}


\begin{thebibliography}{99}           
\bibitem{Alspach and Gavlas} Alspach, B.,  Gavlas, H.: Cycle decompositions of $K_{n}$ and $K_{n}-I$. J. Combin. Theory Ser. B 81, 77--99 (2001)
\bibitem {AndHil1} Andersen, L.D., Hilton, A.J.W.: Generalized Latin rectangles I: Construction and decomposition. Discrete Math. 31(2), 125--152 (1980)
\bibitem {AndHil2} Andersen, L.D.,  Hilton, A.J.W.: Generalized Latin rectangles II: Embedding. Discrete Math. 31(3), 235--260  (1980)
\bibitem{AndRod1} Andersen, L.D.,  Rodger, C.A.: Decompositions of complete graphs: Embedding partial edge-colourings and the method of amalgamations. Surveys in Combinatorics Lond Math Soc Lect Note Ser 307, 7--41 (2003)

\bibitem {BahRod1}Bahmanian, M.A.,  Rodger, C.A.: Multiply balanced edge colorings of multigraphs. submitted for publication 
\bibitem{BoSh} Bose, R.C., Shimamoto, T.: Classification and analysis of partially balanced incomplete block designs with two associate classes. J. Amer. Statist. Assoc. 47, 151--184 (1952) 
\bibitem {FuRod98} Fu, H.L.,  Rodger, C.A.: Group divisible designs with two associate classes: $n=2$ or $m=2$. J. Combin. Theory Ser. A 83, 94--117 (1998) 
\bibitem {FuRod01} Fu, H.L., Rodger, C.A.: 4-cycle group-divisible designs with two associate classes. Combin. Probab. Comput. 10, 317--343 (2001) 
\bibitem {FuRodSar} Fu, H.L.,  Rodger, C.A., Sarvate D.G.: The existence of group divisible designs with first and second associates, having block size 3. Ars Combin. 54, 33--50 (2000) 
\bibitem{MHall45} Hall, M.:  An existence theorem for Latin squares. Bull. Amer. Math. Soc. 51, 387--388 (1945)
\bibitem {H2} Hilton,  A.J.W.: Hamilton decompositions of complete graphs. J. Combin. Theory B 36, 125--134 (1984)
\bibitem {HJRW} Hilton,  A.J.W., Johnson M.,  Rodger, C.A.,  Wantland, E.B.: Amalgamations of connected $k$-factorizations. J. Combin. Theory B 88, 267--279 (2003)
\bibitem {HR} Hilton, A.J.W., Rodger, C.A.: Hamilton decompositions of complete regular $s$-partite graphs. Discrete Math. 58, 63--78 (1986)
\bibitem {LA} Laskar, R.,  Auerbach, B.: On the decompositions of $r$-partite graphs into edge-disjoint hamilton circuits.  Discrete Math. 14, 146--155 (1976)
\bibitem {LinRod92} Lindner, C.C., Rodger, C.A: Generalized embedding theorems for partial Latin squares. Bull. Inst. Combin. Appl. 5, 81--99 (1992) 
\bibitem {L}  Lucas, E.: R\'ecr\'eations Math\'ematiques. Vol. 2 Gauthiers Villars, Paris (1892)
\bibitem {Nash87}  Nash-Williams, C.St.J.A.: Amalgamations of almost regular edge-colourings of simple graphs. J. Combin. Theory B 43, 322--342 (1987)
\bibitem {RW}  Rodger, C.A., Wantland, E.B.: Embedding edge-colorings into 2-edge-connected $k$-factorizations of $K_{kn+1}$. J. Graph Theory 10, 169--185  (1995) 
\bibitem{Sajna 1} \u{S}ajna, M.: Cycle decompositions of $K_{n}$ and $K_{n}-I$. PhD Thesis Simon Fraser University (1999)
\bibitem{Sajna 2}\u{S}ajna, M.: Cycle decompositions III: complete graphs and fixed length cycles. J. Combin. Des. 10, 27--78  (2002) 
\bibitem{Tarsi81}Tarsi, M.: On the decomposition of a graph into stars. Discrete Math. 36, 299--304 (1981)
\bibitem{Tarsi83} Tarsi, M.: Decomposition of a complete multigraph into simple paths: nonbalanced handcuffed designs. J. Combin. Theory Ser. A 34, 60--70 (1983)
\end{thebibliography}
\end{document}